\documentclass{amsart}

\usepackage{amsmath, amssymb, amsthm}

\newtheorem{thm}{Theorem}
\newtheorem{prop}[thm]{Proposition}
\newtheorem{lem}[thm]{Lemma}

\newcommand{\Cl}[1]{{\rm Cl}(#1)}

\numberwithin{equation}{section}

\begin{document}

\title[On centers of blocks]
       {On centers of blocks \\ with non-cyclic defect groups}
\author[Yoshihiro Otokita]
           {Yoshihiro Otokita}
\address{Yoshihiro Otokita: \newline
               Department of Mathematics and Informatics \newline
               Graduate School of Science \newline
               Chiba University}
\email{otokita@chiba-u.jp}
\subjclass[2010]{20C20}
\keywords{Finite group, block, defect group, center, Loewy length}

\begin{abstract}
In this short note we study the center $ZB$ of a block $B$ of a finite group over an algebraically closed field of prime characteristic through its Loewy length ${\rm ll}ZB$. A result of Okuyama in 1981 gave an upper bound for ${\rm ll}ZB$ in terms of defect group of $B$. The purpose of this note is to improve this bound for non-cyclic defect groups.
\end{abstract}

\maketitle

\section{Introduction}
In this short note we study the center of a block of a finite group over an algebraically closed field of prime characteristic through its Loewy length.

Let $G$ be a finite group and $F$ an algebraically closed field of characteristic $p>0$. For a block $B$ of the group algebra $FG$ we denote by $ZB$ its center. In order to examine the structure of $ZB$ we use its Loewy length ${\rm ll}ZB$, that is, the nilpotency index of the Jacobson radical $JZB$. A result of Okuyama \cite{Ok} states that ${\rm ll}ZB \le p^{d}$ where $d$ is the defect of $B$. In addition Koshitani-K\"{u}lshammer-Sambale \cite{KKS} determines ${\rm ll}ZB$ for cyclic defect groups. By this, we consider the other cases in this note. More precisely, we prove the following theorem:

\begin{thm} \label{Thm1} Let $B$ be a block of $FG$ with non-cyclic defect group of order $p^{d}$. Then 
\[ {\rm ll}ZB \le p^{d-1} + p - 1. \]
\end{thm}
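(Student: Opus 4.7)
The plan is to combine (i) a comparison of Loewy filtrations of the form ${\rm ll}ZB \le {\rm ll}(FD)$, in the spirit of Okuyama, with (ii) the classical bound
\[
{\rm ll}(FD) \le p^{d-1} + p - 1
\]
for every non-cyclic $p$-group $D$ of order $p^d$. Granting (i) and (ii), the theorem is immediate.

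For (ii), in the abelian case $D \cong C_{p^{a_1}} \times \cdots \times C_{p^{a_k}}$ with $k \ge 2$ one has ${\rm ll}(FD) = 1 + \sum_i (p^{a_i} - 1)$, and a convexity argument on the partitions of $d$ shows the maximum is realised by $D \cong C_{p^{d-1}} \times C_{p}$, producing exactly $p^{d-1} + p - 1$. For the non-abelian case one invokes Jennings' theorem: writing ${\rm ll}(FD) = 1 + (p-1)\sum_{i \ge 1} i \cdot \dim_{\mathbb{F}_p}(D_i/D_{i+1})$ for the Jennings filtration $\{D_i\}$, the constraint $\dim_{\mathbb{F}_p}(D/\Phi(D)) \ge 2$ (equivalent to non-cyclicity of $D$) together with the descent rules of the Jennings series forces the same upper bound.

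For (i), I would revisit Okuyama's original argument, which already yields ${\rm ll}ZB \le p^d$ and which, in the cyclic case, coincides with ${\rm ll}(FD)$. The natural tool is the Brauer homomorphism ${\rm Br}_D\colon (FG)^D \to FC_G(D)$, whose restriction to $ZB$ is compatible with the Jacobson radical, together with a source-algebra (interior $D$-algebra) description of the Brauer correspondent $b$ realising $Z(Fb)$ inside an algebra directly controlled by $FD$. Chasing a product $z_1 \cdots z_m$ with $z_i \in JZB$ through these maps, the goal is to conclude that the product already vanishes once $m$ exceeds ${\rm ll}(FD)$, rather than only once $m$ exceeds $|D|$.

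The main obstacle is precisely this last step: ruling out any cyclic-specific slack in Okuyama's count and replacing the crude bound $|D|$ by the genuine Loewy length ${\rm ll}(FD)$. Once (i) is secured in this sharper form, the theorem is a direct consequence of the classical estimate recalled in (ii).
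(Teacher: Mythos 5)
Your step (ii) is fine: the bound ${\rm ll}(FD)\le p^{d-1}+p-1$ for a non-cyclic $p$-group $D$ of order $p^{d}$ is a correct and standard consequence of the tensor decomposition in the abelian case and of Jennings' theorem in general. The problem is step (i). The inequality ${\rm ll}ZB\le {\rm ll}(FD)$ for an arbitrary defect group $D$ is not a theorem you can quote, and it does not fall out of ``revisiting Okuyama's argument'': at the time of this paper it was only known for \emph{abelian} defect groups (K\"ulshammer--Sambale), and the general case was an open question. Your own text concedes this --- you describe the key step as ``the main obstacle'' and a ``goal'' rather than something you prove --- so the proposal reduces the theorem to an unproved (and genuinely hard) statement. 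Concretely, the reason the Brauer-homomorphism strategy does not deliver ${\rm ll}(FD)$ is that the refinement of Okuyama's filtration one can actually establish (Passman's lemma, sharpened in Lemma~\ref{Lem2}(1) of the paper) only gives $I_{G}(P)\cdot JZFG^{t(Z(P))}\subseteq \widetilde{I}_{G}(P)$: each descent step in the filtration by defect groups of conjugacy classes is controlled by the Loewy length of $F Z(P)$, not of $FP$, because ${\rm Br}_{P}$ lands in $FC_{G}(P)$ where only $Z(P)$ is central. For non-abelian $D$ this is far weaker than what you need, and no amount of ``chasing a product through these maps'' closes that gap without new input.

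The paper avoids your step (i) entirely by splitting into cases. If $D$ is abelian, it invokes the K\"ulshammer--Sambale bound ${\rm ll}ZB\le t(D)$, which together with your estimate (ii) already gives the theorem. If $D$ is non-abelian, it proves the strictly stronger bound ${\rm ll}ZB<p^{d-1}$ (Proposition~\ref{Prop3}) by iterating the refined Passman lemma along chains $D_{2}<D_{1}<D$, summing the contributions $t(Z(D_{i}))$, and disposing of the groups where $Z(D)$ is too large ($M_{p^{d}}$, $W_{p^{d}}$) and of the residual case $p=3$, $d=4$ by separate arguments (known computations of ${\rm ll}ZB$ for those groups, and a subsection inequality ${\rm ll}ZB\le(|u|-1){\rm ll}Z\bar{b}+1$). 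If you want to salvage your outline, you would need either to prove ${\rm ll}ZB\le {\rm ll}(FD)$ in general --- which would be a significant result in its own right --- or to replace step (i) with a case analysis of this kind.
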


Now let us take a block $B$ with defect group $D$ of order $3^{5}$ as an example. Then ${\rm ll}ZB \le 243$ by Okuyama's formula. Theorem \ref{Thm1} implies that $D$ is cyclic provided $84 \le {\rm ll}ZB \le 243$. In this case ${\rm ll}ZB = 243$ or $122$ by \cite{KKS}. In all other cases we have ${\rm ll}ZB \le 83$. 

We remark that the converse of this theorem is not true in general. For instance a block $B$ with cyclic defect group $C_{p^{2}}$ and inertial quotient group $C_{p-1}$ satisfies ${\rm ll}ZB = p+2 \le 2p-1$ whenever $p \ge 3$. 

\section{Preliminaries}
We prepare some notations. For a conjugacy class $C \in \Cl{G}$ its defect group $\delta (C)$ is defined as a Sylow $p$-subgroup of $C_{G}(x)$ where $x \in C$. For a $p$-subgroup $P$ of $G$ we set
\[ I_{G}(P) = \sum_{C \in \Cl{G}, \delta (C) \le_{G} P} FC^{+}, \ \ \ 
  \widetilde{I}_{G}(P) = \sum_{C \in \Cl{G}, \delta (C) <_{G} P} FC^{+} \]
where $C^{+}$ is the class sum of $C$. These are ideals of the center $ZFG$ of $FG$. Furthermore we denote by $t(P)$ the Loewy length of $FP$ following Wallace \cite{Wa}. 
Here we refine a lemma in Passman \cite{Pa}.

\begin{lem} \label{Lem2} Let $P$ be a $p$-subgroup of $G$. Then the following hold:
\begin{enumerate}
\item $I_{G}(P) \cdot JZFG^{t(Z(P))} \subseteq \widetilde{I}_{G}(P)$.
\item $I_{G}(P) \cdot JZFG^{(p^{a+1} - 1 / p - 1)} = 0$ where $|P| = p^{a}$.
\end{enumerate}
\end{lem}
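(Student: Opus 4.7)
The plan is to establish (1) via the Brauer homomorphism, then deduce (2) by iterating (1). Introduce the Brauer map
\[ \mathrm{Br}_{P}\colon ZFG \to ZFC_{G}(P), \qquad \sum_{g \in G} a_{g} g \mapsto \sum_{g \in C_{G}(P)} a_{g} g, \]
which restricts to a ring homomorphism on centres (it is multiplicative on $P$-fixed elements, and $Z(P) \le Z(C_{G}(P))$ forces the image into $ZFC_{G}(P)$). A direct check on class sums identifies $\widetilde{I}_{G}(P) = I_{G}(P) \cap \ker(\mathrm{Br}_{P})$: any class $C$ with $\delta(C) <_{G} P$ has $C \cap C_{G}(P) = \emptyset$ (otherwise some $C_{G}(y)$ contains $P$, forcing $P \le_{G} \delta(C)$), while classes in $I_{G}(P)$ with $\delta(C) =_{G} P$ do meet $C_{G}(P)$ and the resulting Brauer images have disjoint supports. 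So (1) is reduced to showing
\[ \mathrm{Br}_{P}(I_{G}(P)) \cdot \mathrm{Br}_{P}(JZFG)^{t(Z(P))} = 0 \quad \text{in } ZFC_{G}(P). \]

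To see this, I would first treat the toy case $G = P$, where $\mathrm{Br}_{P}$ induces an isomorphism $ZFG/\widetilde{I}_{G}(P) \cong FZ(P)$ sending $JZFG$ onto $JFZ(P)$; nilpotency of $JFZ(P)$ with index $t(Z(P))$ gives the conclusion immediately. For general $G$, one refines Passman's argument by tracking the $Z(P)$-support of Brauer images of elements of $JZFG$ and verifying that the action of $\mathrm{Br}_{P}(JZFG)$ on $\mathrm{Br}_{P}(I_{G}(P))$ factors through $JFZ(P) \subseteq FZ(P) \subseteq ZFC_{G}(P)$. This is exactly where the sharper $t(Z(P))$ (rather than a coarser bound like $|Z(P)|$) enters.

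For (2), iterate (1). I would prove by induction on $b$ that $I_{G}(Q) \cdot JZFG^{(p^{b+1}-1)/(p-1)} = 0$ for every $p$-subgroup $Q$ of $G$ of order $p^{b}$. The base $b = 0$ is (1) applied to $Q = 1$, since $t(Z(1)) = 1$ and $\widetilde{I}_{G}(1) = 0$. For the inductive step, (1) gives
\[ I_{G}(Q) \cdot JZFG^{t(Z(Q))} \subseteq \widetilde{I}_{G}(Q) \subseteq \sum_{|Q'| \le p^{b-1}} I_{G}(Q'), \]
and each summand is killed by a further $JZFG^{(p^{b}-1)/(p-1)}$ by the inductive hypothesis. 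Using $t(Z(Q)) \le |Z(Q)| \le |Q| = p^{b}$ and the telescoping identity $p^{b} + (p^{b}-1)/(p-1) = (p^{b+1}-1)/(p-1)$ closes the induction; specialising to $Q = P$ with $b = a$ yields (2).

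The main obstacle is the middle step of (1): justifying that $\mathrm{Br}_{P}(JZFG)$, which a priori lands only in $JZFC_{G}(P)$, in fact acts on $\mathrm{Br}_{P}(I_{G}(P))$ through the potentially much smaller ideal $JFZ(P)$. While transparent in the toy case $G = P$, the general case requires careful bookkeeping of how class sums of $G$ decompose under $\mathrm{Br}_{P}$ relative to $C_{G}(P)$-conjugacy classes meeting the image of $I_{G}(P)$. This is essentially the content of Passman's original lemma in \cite{Pa}, refined here so that the nilpotency index $t(Z(P))$ of $FZ(P)$ replaces its dimension $|Z(P)|$.
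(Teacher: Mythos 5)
Your overall strategy for (1) coincides with the paper's: pass through the Brauer homomorphism ${\rm Br}_P$, identify $\widetilde{I}_G(P)$ with $I_G(P)\cap{\rm Ker}\,{\rm Br}_P$, and reduce to showing that ${\rm Br}_P(I_G(P))\cdot{\rm Br}_P(JZFG)^{t(Z(P))}$ vanishes in $ZFC_G(P)$. But the step you yourself single out as ``the main obstacle'' is left unproved, and your plan for filling it --- refining Passman's \emph{argument} by tracking $Z(P)$-supports of Brauer images --- mislocates where the work happens. The whole point of applying ${\rm Br}_P$ is that in the target group $C_G(P)$ the subgroup $Z(P)$ is \emph{central}, and for a central $p$-subgroup $Q$ of a group $H$ Passman's Lemma 2(i) already supplies the commutation identity $I_H(Q)\cdot JZFH = JFQ\cdot I_H(Q)$ verbatim; no support bookkeeping is needed. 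Applying it with $H=C_G(P)$, $Q=Z(P)$ and raising to the power $t(Z(P))$ gives
\[ I_{C_G(P)}(Z(P))\cdot JZFC_G(P)^{t(Z(P))} = JFZ(P)^{t(Z(P))}\cdot I_{C_G(P)}(Z(P)) = 0, \]
since $t(Z(P))$ is by definition the nilpotency index of $JFZ(P)$ --- this is exactly where the sharpening from $|Z(P)|$ to $t(Z(P))$ comes for free. One also needs ${\rm Br}_P(JZFG)\subseteq JZFC_G(P)$ (Brauer homomorphisms send nilpotents to nilpotents) and ${\rm Br}_P(I_G(P))\subseteq I_{C_G(P)}(Z(P))$, both of which are in Passman. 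So the gap is real as the proposal stands --- (1) is asserted rather than proved --- but it is fillable by a one-line citation rather than by the delicate analysis you anticipate.

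For (2) you depart from the paper, which simply quotes Passman's Lemma 3(ii). Your induction on $|Q|=p^{b}$, deriving (2) from (1) via $t(Z(Q))\le|Z(Q)|\le p^{b}$ and the telescoping identity $p^{b}+(p^{b}-1)/(p-1)=(p^{b+1}-1)/(p-1)$, is correct and self-contained (it is essentially Passman's own induction run with the sharper input from (1)); this is a legitimate alternative, though it of course only becomes a proof once the key step of (1) is actually established.
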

\begin{proof}
It remains only to prove (1) by {\cite[Lemma 3 (ii)]{Pa}}. Let ${\rm Br}_{P} : ZFG \to ZFC_{G}(P)$ be the Brauer homomorphism associated to $P$. Since ${\rm Br}_{P}$ maps nilpotent elements to nilpotent elements we have ${\rm Br}_{P} (JZFG) \subseteq JZFC_{G}(P)$. On the other hand ${\rm Br}_{P} (I_{G}(P)) \subseteq I_{C_{G}(P)}(Z(P))$ holds (see the proof of {\cite[Lemma 3 (i)]{Pa}}). Thus it follows from {\cite[Lemma 2 (i)]{Pa}} that 
\begin{align*}
{\rm Br}_{P} (I_{G}(P) \cdot JZFG^{t(Z(P))}) & \subseteq I_{C_{G}(P)} (Z(P)) \cdot JZFC_{G}(P)^{t(Z(P))} \\
                                                                  & = JFZ(P)^{t(Z(P))} \cdot I_{C_{G}(P)}(Z(P)) = 0 
\end{align*}
since $Z(P)$ is central in $C_{G}(P)$. Therefore we deduce
\[ I_{G}(P) \cdot JZFG^{t(Z(P))} \subseteq {\rm Ker} {\rm Br}_{P} \cap I_{G}(P) = \widetilde{I}_{G}(P) \]
as claimed.
\end{proof}

\section{Proof of main theorem}

We first improve K\"{u}lshammer-Sambale {\cite[Theorem 12 and Proposition 15]{KS}} by using Lemma \ref{Lem2}. 

\begin{prop} \label{Prop3} Let $B$ be a block of $FG$ with non-abelian defect group of order $p^{d}$. Then ${\rm ll}ZB < p^{d-1}$.
\end{prop}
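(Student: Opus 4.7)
The plan is to apply Lemma~\ref{Lem2}(1) at $P = D$ once, exploiting that the non-abelianness of $D$ forces $t(Z(D))$ to be strictly small, and then to control the resulting sub-ideal using the K\"ulshammer--Sambale bounds that this proposition refines.

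The key structural input is classical: since $D$ is non-abelian, $D/Z(D)$ is non-cyclic, which forces $|Z(D)| \le p^{d-2}$ and hence $t(Z(D)) \le p^{d-2}$. I would first apply Lemma~\ref{Lem2}(1) with $P = D$ to get
\[ I_G(D) \cdot (JZFG)^{t(Z(D))} \subseteq \widetilde{I}_G(D). \]
Since the block idempotent $e_B$ lies in $I_G(D)$ (as the defect group of $B$ is $D$), multiplying by $e_B$ and using $JZB = e_B \cdot JZFG$ yields
\[ (JZB)^{t(Z(D))} \subseteq e_B \cdot \widetilde{I}_G(D). \]

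Next, I would bound the nilpotency of $e_B \cdot \widetilde{I}_G(D)$ inside $ZB$. This sub-ideal is supported on class sums of $B$ whose defect groups have order at most $p^{d-1}$. Applying \cite[Theorem 12 and Proposition 15]{KS} (possibly together with Lemma~\ref{Lem2}(2) for subgroups of order $p^{d-1}$), one obtains an annihilation $(e_B \cdot \widetilde{I}_G(D)) \cdot (JZB)^m = 0$ for some $m \le p^{d-1} - p^{d-2} - 1$. Combining the two displays gives
\[ {\rm ll}ZB \le t(Z(D)) + m \le p^{d-2} + (p^{d-1} - p^{d-2} - 1) = p^{d-1} - 1 < p^{d-1}. \]

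The main obstacle is the second step. A direct use of Lemma~\ref{Lem2}(2) with $|Q| = p^{d-1}$ only yields $m \le (p^d - 1)/(p-1)$, which is far too weak. The tight estimate $m \le p^{d-1} - p^{d-2} - 1$ requires the refined iteration from \cite{KS}; the advantage of Lemma~\ref{Lem2}(1) over Passman's original lemma is precisely the replacement of $t(P)$ by the smaller $t(Z(P))$, and this saving has to be tracked through the iteration along a chain of $p$-subgroups so as to bring the total strictly below $p^{d-1}$.
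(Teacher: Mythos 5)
Your overall strategy --- use Lemma~\ref{Lem2}(1) at $P=D$ to push $(JZB)^{t(Z(D))}$ into $e\cdot\widetilde{I}_G(D)$ and then annihilate the residual ideal --- is the same as the paper's, but the step you yourself flag as ``the main obstacle'' is a genuine gap, and the bound you assert there is not supported by anything you cite. You claim $(e\cdot\widetilde{I}_G(D))\cdot(JZB)^m=0$ with $m\le p^{d-1}-p^{d-2}-1$. Note that $\widetilde{I}_G(D)=\sum_{D_1<D}I_G(D_1)$ includes maximal subgroups $D_1$ of order $p^{d-1}$, and if some such $D_1$ is cyclic then already $t(Z(D_1))=p^{d-1}$ exceeds your budget, while Lemma~\ref{Lem2}(2) only gives $(p^{d}-1)/(p-1)$, as you note. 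The paper avoids this by first disposing of the cases where $Z(D)$ is cyclic of order $p^{d-2}$ (the groups $M_{p^d}$ and $W_{p^d}$) and where $D$ has a cyclic subgroup of index $p$ (which forces $D\simeq M_{p^d}$), quoting results of K\"ulshammer--Sambale for those groups directly. Only after these exclusions does one know that every $D_1<D$ is non-cyclic or of order at most $p^{d-2}$, and that $t(Z(D))\le p^{d-3}+p-1$ (not merely $p^{d-2}$). Even then the paper needs a \emph{two-step} iteration of Lemma~\ref{Lem2}(1) followed by Lemma~\ref{Lem2}(2), giving the total $\lambda_0+\lambda_1+\lambda_2=(p^{d-3}+p-1)+(p^{d-2}+p-1)+(p^{d-1}-1)/(p-1)$; the residual contribution $\lambda_1+\lambda_2$ alone is strictly larger than your claimed $p^{d-1}-p^{d-2}-1$ (e.g.\ $69$ versus $53$ for $p=3$, $d=5$), so your $m$ cannot be produced by these tools, and the sum only closes because $\lambda_0$ is much smaller than your $t(Z(D))\le p^{d-2}$.

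A second omission: even the paper's sharper bookkeeping fails for $p=3$, $d=4$, where $\lambda_0+\lambda_1+\lambda_2=29>27=p^{d-1}$. That case requires an entirely different argument --- a nontrivial $B$-subsection $(u,b)$ with the inequality ${\rm ll}ZB\le(|u|-1){\rm ll}Z\bar b+1$ for the dominated block $\bar b$ of $F[C_G(u)/\langle u\rangle]$, combined with a classification of the possible quotients $D/\langle u\rangle$. Your proposal contains no mechanism for this exceptional case, so even if the quantitative step were repaired along the paper's lines, the argument would still be incomplete.
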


\begin{proof}
We may assume $p \neq 2$ by {\cite[Proposition 15]{KS}}. Let $D$ be a defect group of $B$. If $Z(D)$ is cyclic of order $p^{d-2}$ then $D$ is one of the following types:
\begin{align*}
& M_{p^{d}} := <x, y \ | \ x^{p^{d-1}} = y^{p} = 1, y^{-1}xy = x^{p^{d-2}+1} >, \\
& W_{p^{d}} := <x, y, z \ | \ x^{p^{d-2}} = y^{p} = z^{p} = [x, y] = [x, z] = 1, [y, z] = x^{p^{d-3}}>
\end{align*}
where $d \ge 3$. In both cases ${\rm ll}ZB < p^{d-1}$ (see {\cite[Proposition 10 and Lemma 11]{KS}}). If $D$ has a cyclic subgroup of index $p$ then $D \simeq M_{p^{d}}$ (e.g. see {\cite[Chapter 5, Theorem 4.4]{Go}} ). Thereby we need only consider the other cases. Since $Z(D)$ is non-cyclic or has order at most $p^{d-3}$, we have $\lambda_{0} := p^{d-3} + p - 1 \ge t(Z(D))$. Thus we first obtain from Lemma \ref{Lem2} (1) that
\[ I_{G}(D) \cdot JZFG^{\lambda_{0}} \subseteq I_{G}(D) \cdot JZFG^{t(Z(D))} \subseteq \widetilde{I}_{G}(D) = \sum_{D_{1} < D} I_{G}(D_{1}). \]
By our assumptions above, $D_{1}$ is non-cyclic or has order at most $p^{d-2}$. In both cases we have $\lambda_{1} := p^{d-2}+p-1 \ge t(Z(D_{1}))$. Thus
\begin{align*}
I_{G}(D) \cdot JZFG^{\lambda_{0} + \lambda_{1}} & \subseteq \sum_{D_{1} < D} I_{G}(D_{1}) \cdot JZFG^{\lambda_{1}} \subseteq \sum_{D_{1} < D} I_{G}(D_{1}) \cdot JZFG^{t(Z(D_{1}))} \\
& \subseteq \sum_{D_{1} < D} \widetilde{I}_{G}(D_{1}) = \sum_{D_{2} < D_{1} < D} I_{G}(D_{2}). 
\end{align*}
Finally,  it follows from Lemma \ref{Lem2} (2) that 
\[ I_{G}(D) \cdot JZFG^{\lambda_{0} + \lambda_{1} + \lambda_{2}} \subseteq \sum_{D_{2} < D_{1} < D} I_{G}(D_{2}) \cdot JZFG^{\lambda_{2}} = 0 \]
where $\lambda_{2} := p^{d-1}-1/p-1$ since $|D_{2}| \le p^{d-2}$. Now let $e$ be the block idempotent of $B$. Then
\[ JZB^{\lambda_{0} + \lambda_{1} + \lambda_{2}} = e JZFG^{\lambda_{0} + \lambda_{1} + \lambda_{2}} \subseteq I_{G}(D) \cdot JZFG^{\lambda_{0} + \lambda_{1} + \lambda_{2}} = 0 \]
and this means ${\rm ll}ZB \le \lambda_{0} + \lambda_{1} + \lambda_{2}$. Accordingly, ${\rm ll}ZB < p^{d-1}$ except for one case that $p=3$ and $d=4$. Hence we consider this case in the following. From \cite{Ot2} (see {\cite[proof of Theorem 1.3]{Ot1}}), there exists a non-trivial $B$-subsection $(u, b)$ such that 
\[ {\rm ll}ZB \le (|u|-1) {\rm ll}Z\bar{b} + 1 \]
where $\bar{b}$ is the unique block of $F[C_{G}(u) / \langle u \rangle]$ dominated by $b$. We may assume that $\bar{b}$ has defect group $C_{D}(u) / \langle u \rangle$ by Sambale {\cite[Lemma 1.34]{Sa}}. We put $|u|=3^{s}$ and $|C_{D}(u)|=3^{r}$. If $r \le d-2$ then 
\[ {\rm ll}ZB \le (3^{s}-1)3^{r-s} + 1 \le (3^{s}-1)3^{d-s-2} + 1 < 3^{d-1}. \]
In case of $r=d-1$, we may $r > s$ by our assumptions and thus 
\[ {\rm ll}ZB \le (3^{s}-1)3^{r-s} + 1 = 3^{r} - 3^{r-s} + 1 < 3^{r} = 3^{d-1} \]
as required. Therefore we may assume $d=r$, so that $u \in Z(D)$. Hence $|u|=3$ and $D / \langle u \rangle$ is isomorphic to $C_{3} \times C_{3} \times C_{3}, C_{9} \times C_{3}, M_{27}$ or $W_{27}$ by our assumptions. In all cases ${\rm ll}Z \bar{b} \le11$ by {\cite[Theorem 1, Proposition 10 and Lemma 11]{KS}}. Consequently, ${\rm ll}ZB \le 23 < 27 = p^{d-1}$. Our claim is completely proved.
\end{proof} 

Theorem \ref{Thm1} is an immediate corollary to Proposition \ref{Prop3}.

\begin{proof}[Proof of Theorem \ref{Thm1}]
We may assume $p^{d-1} < {\rm ll}ZB$ and thus $D$ is abelian by Proposition \ref{Prop3}. In this case K\"{u}lshammer-Sambale \cite{KS} has proved that ${\rm ll}ZB \le t(D)$. Hence our claim follows.
\end{proof}

\end{document}